\let\OldStatex\Statex
\renewcommand{\Statex}[1][3]{%
	\setlength\@tempdima{\algorithmicindent}%
	\OldStatex\hskip\dimexpr#1\@tempdima\relax}
\tikzstyle{ov}=[shape=rectangle,
\tikzstyle{lv}=[shape=circle,draw=black!80,thick,minimum width=1.3cm]
\tikzstyle{err}=[circle,
\tikzset{
>=stealth',
help lines/.style={dashed, thick},
axis/.style={<->},
important line/.style={thick},
connection/.style={thick, dotted},
}
\newtheoremstyle{theorem}
{10pt} 
{10pt} 
{\sl} 
{\parindent} 
{\bf} 
{. } 
{ } 
{} 
\theoremstyle{theorem}
\newtheorem{theorem}{Theorem}
\newtheorem{corollary}{Corollary}
\DeclareMathOperator*{\argmin}{argmin}
\def\d{{\rm d}}
\newcommand{\half} { \frac{1}{2} }
\newcommand{\inv} { {-1} }
\newcommand{\mlt}{T_{\text{ML}}}
\newcommand{\sba}{T_{\text{SB}}}
\newcommand{\sbb}{T_{\text{SS}}}
\newcommand{\npconv} { \xrightarrow[n \rightarrow \infty]P }
\newcommand{\ndconv} { \xrightarrow[n \rightarrow \infty]{D} }
\title[New testing procedures for moment structure models]{New testing procedures \\ for structural equation modeling}
\author{Steffen Gr\o nneberg}
\address{Department of Economics\\
BI Norwegian Business School\\
Oslo, Norway 0484}
\email{steffeng@gmail.com}
\author{Nj\aa l Foldnes}
\address{Department of Economics\\
	BI Norwegian Business School\\
	Stavanger, Norway 4014}
\email{njal.foldnes@bi.no}
\begin{document}

\begin{abstract}
	We introduce and evaluate a new class of hypothesis testing procedures for moment structures. The methods are valid under weak assumptions and includes the 
 well-known Satorra-Bentler adjustment as a special case. The proposed procedures applies also to difference testing among nested models.  
 We prove the consistency of our approach. We introduce a bootstrap selection mechanism to optimally choose a p-value approximation for a given sample.
	Also, we propose bootstrap procedures for assessing the asymptotic robustness (AR) of the normal-theory maximum likelihood test, and for the key assumption underlying the Satorra-Bentler adjustment (Satorra-Bentler consistency).
	Simulation studies indicate that our new p-value approximations performs well even under severe nonnormality and realistic sample sizes, but that our tests for AR and Satorra-Bentler consistency require very large sample sizes to work well.
	R code for implementing our methods is provided.
\end{abstract}

\date{\today}

\maketitle

\section{Introduction }

In testing hypotheses in psychometrics, test statistics often converge in law  to a mixture of independent chi squares, under the null hypothesis of correct model specification. This paper presents novel methods for calculating p-values based on such test statistics, and a novel selection procedure aimed at identifying the best p-value among any given set of candidate p-value procedures. 
Although the proposed methods can be used in the general setting of moment structure inference, 
we here focus on the framework of structural equation modeling (SEM).

As shown in \citet{shapiro1983asymptotic} and \citet{satorra1989alternative},
 a large class of p-values in the context of SEM originates from convergence in distribution results (derived under the null hypothesis) for a test statistic $T_n$ based on $n$ observations, of the form
\begin{equation} \label{mixture}
T_n \ndconv \sum_{j=1}^d \lambda_j Z_j^2, \qquad Z_1, \ldots, Z_d \sim N(0,1) \text{ IID},
\end{equation}
where $\lambda = (\lambda_1, \ldots, \lambda_d)'$ consists of unknown population parameters. 
If $\lambda$ was known, eq.~\eqref{mixture} motivates the ``oracle'' p-value
\begin{equation} \label{equ::oracledef}
p_{n} =  P \left( \sum_{j=1}^d \lambda_j Z_j^2 > T_n \right).
\end{equation}
The above probability is with respect to $Z_1, \ldots, Z_d$, while $T_n$ is considered fixed.
In a practical setting $\lambda$ is however unknown. 
Let   $\hat \lambda$ be a a
consistent estimator   of $\lambda$, i.e., $\hat \lambda \npconv \lambda$. 
In the present article we propose to estimate  $p_{n}$ by
\begin{equation} \label{equ::phatdef}
\hat p_{n} =P \left( \sum_{j=1}^d \hat \lambda_j Z_j^2 > T_n \right),
\end{equation}
where the probability is with respect to $Z_1, \ldots, Z_d$.

 We show that for large samples, the error originating from replacing $\lambda$ with $\hat \lambda$ is vanishing, so that $\hat p_n - p_n$ converges  in probability to $0$. The estimator $\hat p_n$ defined above is the canonical member of a new class of estimators, obtained by grouping the $\hat \lambda_j$ by magnitude and replacing them by group means in order to reduce variance in $\hat p_n$.
Although the idea behind this new class of p-value approximations is simple,  we are unaware that it is found previously in the literature. 

Since we introduce a whole class of p-value approximations, where no member seems to be uniformly best in all conditions, we also introduce a selector to aid the user in choosing which p-value approximation to apply. The core idea of this selector is to choose the p-value approximation whose distribution is closest to the uniform, as measured by the supremum distance. This is achieved through the non-parametric bootstrap and is seen to work very well in our simulation experiments.

The paper is structured as follows. 
In Section \ref{section::theory} we review fit statistics of moment structures with a special emphasis on 
the well-known Satorra-Bentler (SB) statistic.
Section \ref{section::testprocedure} proposes 
  a class of new procedures, that incorporates the SB statistic as a special case, to evaluate model fit and parameter restrictions in covariance models. We give conditions under which the estimators are consistent, which implies the fundamental p-value property of converging in distribution to a uniform distribution.
  In Section \ref{section::selection} we  introduce a bootstrap procedure that selects, for a given sample, a good candidate among a list of p-value approximations.
  Next, in Section \ref{section::ar} we introduce a bootstrap test for assessing whether
  the normal-theory maximum likelihood  test statistic may be trusted, i.e., whether asymptotic robustness holds. Also, we introduce a test for the consistency of the SB statistic, which may help decide whether the SB statistic may be trusted. 
Monte Carlo results on the performance of the proposed new methods are presented in  Section \ref{section::montecarlo}. In the final section we discuss our findings and point out further directions for research. 
Proofs of theoretical results are presented in the appendix.

\section{Fit statistics for moment structure models} \label{section::theory}

Consider a  $p$-dimensional  vector of population moments  $ \sigma^\circ$. In covariance modeling, $\sigma^\circ$ consists of second-order moments, but in more general structural equation models the means may also be included in $ \sigma^\circ$.  
The corresponding sample moment vector $s$ is assumed to converge in probability to $\sigma^\circ$, i.e., $s \npconv \sigma^\circ$, and be asymptotically normal, i.e.̃, $\sqrt{n} ( s - \sigma^\circ) \ndconv N(0, \Gamma)$.
A structural equation model  implies a certain parametrization $\theta \mapsto \sigma(\theta)$ with $\theta$ varying in a set $\Theta$.
Let the free parameters in the proposed  model be contained  in the $q$-vector $\theta$.
 The model has degrees of freedom given by $d=p-q$.

The model is said to be correctly specified if there is a $\theta^\circ \in \Theta$ such that $\sigma(\theta^\circ) = \sigma^\circ$.
A very general class of estimators for 
$\theta^\circ$ introduced by \cite{browne1982covariance, browne1984asymptotically}  is obtained by minimising   
  discrepancy functions $F=F(s, \sigma)$ 
  that obey the following three conditions: $F(s, \sigma) \geq 0$ for all $s, \sigma$; $F(s, \sigma)=0$ if and only if $s = \sigma$; and $F$ is twice continuously differentiable jointly. That is, we consider estimators obtained as
\[
\hat \theta = \argmin_{\theta \in \Theta} F(s, \sigma(\theta)).
\]
It is well known that the widely used normal-theory maximum likelihood (NTML)
estimator is such a minimal discrepancy estimator.

Similarly, we may define the least false parameter configuration, which we  denote with $\theta^\circ$. That is,
\[
\theta^\circ = \argmin_{\theta \in \Theta} F(\sigma^\circ, \sigma(\theta)).
\]
Irrespective of the correctness of the model, we have $\hat \theta \npconv \theta^\circ$ under mild regularity conditions. 

One, out of several mainly asymptotically equivalent \citep[see][]{satorra1989alternative} ways of assessing the correctness of the model is to study $T_n =  n F(s, \sigma(\hat \theta))$.
If the model is misspecified, i.e. if $\sigma^\circ \neq \sigma(\theta^\circ)$, then  $T_n \rightarrow \infty$ since $s \npconv \sigma^\circ \neq \sigma(\theta^\circ)$. 
Under correct model specification and other assumptions presented in \citet{shapiro1983asymptotic} and \citet{satorra1989alternative}, we have
$
T_n = \sqrt{n} (s - \sigma^\circ)' U \sqrt{n} (s-\sigma^\circ) + o_P(1).
$
Assuming (for simplicity) that $\Delta' V \Delta$ is non-singular (see comment immediately following eq.~(9) in \citet{satorra1989alternative}) where  $\Delta$ is the $p \times q$ derivative matrix $ \partial  \sigma (\theta) / \partial \theta'$ evaluated at $\theta^\circ$, and $V = \frac{1}{2} \frac{\partial^2 F(s, \sigma) }{\partial s \partial \sigma}$, evaluated at $(\sigma^\circ, \sigma^\circ)$, we have 
\begin{equation} \label{equ::Udef}
U={V}- {V} \Delta \left\{\Delta' {V} \Delta  \right\} ^{-1}\Delta'{V}.
\end{equation}
Note that $U$ has rank $d$.
Since we assume $\sqrt{n}(s - \sigma^\circ) \ndconv Q \sim N(0,\Gamma)$, the continuous mapping theorem now implies that 
$
T_n \ndconv Q' U Q.
$
By Theorem 1 in \citet{Box}, we have 
\begin{equation} \label{equ::box}
T_n  \ndconv Q' U Q = \sum_{j=1}^d \lambda_j Z_j^2, \qquad Z_1, \ldots, Z_d \sim N(0,1) \text{ IID},
\end{equation}
where $\lambda_1, \ldots, \lambda_d$ are the $d$ non-zero eigenvalues of $U \Gamma$  under the standard  scaling of the eigenvectors. That is, the parameters $\lambda$ in eq.~\eqref{mixture} are eigenvalues of a certain matrix that depends both on the underlying distribution and on the proposed model. Note that estimating $U$ and $\Gamma$ is a standard problem in moment models which we will not discuss in technical detail. The usual estimators are based on replacing expectations with averages of the observed data, and the true least-false parameter $\theta^\circ$ by the estimator $\hat \theta$. This is the estimator readily available in software packages such as the \textsf{R} package lavaan \citep{rosseel2012lavaan}. We here assume that consistent estimators $\hat U$,  $\hat \Gamma$  and $\hat \lambda$ are given. We may use the plug-in method to form $\hat \lambda$, so that $\hat \lambda$ is the $d$ non-zero eigenvalues of $\hat U \hat \Gamma$ under the standard  scaling of the eigenvectors.

Note that the asymptotically distribution-free (ADF) estimator of \citet{browne1984asymptotically}, where
the estimate is obtained by minimising a quadratic form whose weight matrix is the inverse of a distribution-free
estimate of $\Gamma$, yields a test statistic $T_{\text{ADF}}$ whose population eigenvalues are all equal to one. 
Hence ADF estimation leads to 
consistent p-values for model fit. However, ADF estimation is unstable in small samples, and it is well-known that $T_{\text{ADF}}$
has unacceptably poor performance in small and medium samples sizes \citep{Curran, hu92}.
Another test statistic with consistent p-value approximation is the residual-based test statistic \cite[eq. 2.20]{browne1984asymptotically}, which is not of
 the form $T_n = n F(s, \sigma(\hat \theta))$ investigated in the present article.
 Unfortunately this statistic suffers from
the same lack of acceptable finite-sample performance as $T_{\text{ADF}}$. 
Therefore, a more popular approach has been to  use normal-theory based estimators, and to correct the test statistic for
non-normality in the data. We now proceed to describe such methods.

Based on the convergence result in eq.~\eqref{mixture}, \cite{SatorraBentler94} proposed to rescale $T_n$ by dividing it by the mean value of the eigenvalues to form 
\[
\sba= \frac{ T_n}{\hat c},
\]
where $\hat c=\frac{\sum_{j=1}^{d}\hat \lambda_j}{d}$.
Using $\sba$ as a test statistic is a widely used SEM practice under conditions of non-normal data.  Simulation studies report that $\sba$ outperforms the NTML fit statistic $\mlt$ in such conditions,
but that Type I error rates under $\sba$ 
are seriously inflated under substantial excess kurtosis in the data \citep{bentler1999structural, nevitt2004evaluating, Savalei:2010, foldnes2015correcting}. Also, \cite{bentleryuan2010} theoretically demonstrated that $\sba$ departs from a chi-square with increasing dispersion of the  eigenvalues in \eqref{mixture}.

Recently \cite{asparouhov2010simple} proposed a test statistic that 
agrees with the reference chi-square distribution in both asymptotic mean and variance, obtained from $\mlt$ by scaling and shifting. This statistic,  found to perform slightly better \citep{foldnes2015correcting} than a Sattertwaithe type test statistic proposed by \cite{SatorraBentler94},  
is given by
\[
\sbb	=\sqrt{\frac{d}{tr\left( (\hat U \hat \Gamma)^2\right)}} \cdot T_n+
d-\sqrt{\frac{d  \left( tr( \hat{U} \hat{\Gamma}) \right)^2} {tr\left( (\hat U \hat \Gamma)^2\right)}}.
\]

A quite different testing methodology is offered by the so-called Bollen-Stine bootstrap \citep{bollen1992bootstrapping}, 
which is based on the non-parametric bootstrap \citep{efron1994introduction}.
Instead of starting with the fundamental result in eq.~\eqref{mixture}, one starts by  transforming the sample observations $X_i$ into $\tilde X_i = \Sigma(\hat \theta)^{1/2} S_n^{-1/2} X_i$ for $i = 1, 2, \ldots, n$, 
where $S_n$ and $\Sigma(\hat \theta)$ are the sample and model-implied covariance matrices, respectively. Noting that the model holds exactly in this transformed sample, we  proceed by assuming that the transformed sample may serve as a proxy for the population from which the original sample was drawn. 
The Bollen-Stine $p$-value is now obtained
by drawing bootstrap samples from the transformed sample, 
and calculating the 
proportion of bootstrap test statistics that exceed the test statistic obtained from the original sample. The validity of this approach is derived in \citet{beran1985bootstrap}. \cite{Nevitt:2001gg} report that Bollen-Stine bootstrapping outperformed the SB scaling approach under correct model specification at realistic sample sizes, having  Type I errors slightly below the nominal level. Despite the promising performance of the Bollen-Stine bootstrap, it seems to be relatively understudied. 
In fact,  we are not aware of any later simulation study that systematically evaluates its performance relative to other robust test statistics.

Our upcoming selection methodology to be described in Section \ref{section::selection} will fuse the two ideas discussed above,   trying to combine the strength of the convergence in eq.~\eqref{mixture} with the power of the non-parametric bootstrap. Our tests for AR and Satorra-Bentler consistency described in Section \ref{section::ar}, are also based on the non-parametric bootstrap. Before describing these bootstrap based methods, we return to the fundamental convergence result in eq.~\eqref{mixture} and present new approximations for the oracle p-value.

\section{A new class of p-value approximations} \label{section::testprocedure}

In this section we introduce and establish the consistency of a new computational technique for p-values. 
The proposed methodology applies as long as the null distribution of a test statistic is a weighted sum of independent chi squares and the weights can be estimated consistently.  
This means that the method may be used both for conventional goodness-of-fit testing of a single proposed model, and for 
 nested model comparison tests. 
Consistency   is established in Theorem \ref{theorem::consistency}, and the proof is found in Appendix \ref{section::proof}.

The convergence result in eq. \eqref{mixture} is only valid if the model is correctly specified. But we here note that $U \Gamma$ is defined also when the model is misspecified, and that the number of non-zero eigenvalues is known to be $d$ from the model configuration. We may therefore speak of and estimate $\lambda = (\lambda_1, \lambda_2, \ldots, \lambda_d)'$  without knowing if the model is correctly specified. We refer to the  p-value in \eqref{equ::phatdef} as the full p-value approximation.
We will also shortly introduce other estimators by combining the $\hat \lambda_j$ 
in eq.\eqref{equ::phatdef} in ways that may reduce variability in $\hat p_n$, although at the expense of consistency. 
This may be reasonable in situations where the full estimates are unstable, e.g,  under small sample sizes and highly non-normal data. In fact,  the familiar $\sba$ procedure may  conceptualized as an (inconsistent) p-value approximation where the $\lambda_j$ are replaced by 
the mean value of the canonical estimates,  i.e., $\hat \lambda_j^{SB} = \frac{\sum_{j=1}^{d}\hat \lambda_j}{d}$, $j=1, \ldots, d$, 
and clearly
\[
\hat p_{SB} = P \left( \sum_{j=1}^{d} \hat \lambda_j^{SB} Z_j^2 > T_n\right).
\]

We obtain a valid approximation as long as $\hat \lambda \npconv \lambda$, as the following theorem shows. Note that we make \emph{no} assumptions on $T_n$. That is, the approximation holds irrespective of the correctness of the model. Note also that we typically have $\|\hat \lambda - \lambda\| = O_P(n^{-1/2})$, i.e.,~$\sqrt{n} [\hat p_{n} - p_{n}]$ stays bounded in probability.

\begin{theorem} \label{theorem::consistency}
	Let $(T_n)$ be a sequence of random variables, and let $p_{n} = 1-  H(T_n ; \lambda)$ and $\hat p_{n} = 1 - H(T_n ; \hat \lambda)$ where $H(q ; \lambda_1, \ldots, \lambda_r) = P ( \sum_{j=1}^{d} \lambda_j Z_j^2 \leq q)$.
	If $\hat \lambda \npconv \lambda$ where $\lambda$ only has positive elements, then $\hat p_{n} - p_{n} = \| \hat \lambda - \lambda \| O_P(1)$, and hence, 
	$
	\hat p_{n} - p_{n} \npconv 0.
	$
\end{theorem}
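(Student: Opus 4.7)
The plan is to reduce the theorem to a deterministic analytic fact about the function $H$: that for $\lambda$ restricted to a compact neighborhood $N$ of the true $\lambda$ inside $(0,\infty)^d$, there is a constant $L = L(N)$ such that
\[
|H(q;\lambda)-H(q;\tilde\lambda)|\;\le\;L\,\|\lambda-\tilde\lambda\|
\qquad\text{for all }q\in\mathbb{R}\text{ and all }\lambda,\tilde\lambda\in N.
\]
Given this uniform Lipschitz property, the theorem is immediate: since $\lambda$ has strictly positive entries, $N$ can be chosen with all entries bounded away from $0$; by $\hat\lambda\npconv\lambda$ the event $E_n=\{\hat\lambda\in N\}$ has probability tending to one; and on $E_n$ we apply the bound with $q=T_n$, $\tilde\lambda=\hat\lambda$ to obtain $|\hat p_n-p_n|\le L\,\|\hat\lambda-\lambda\|$. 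This is the claimed $\|\hat\lambda-\lambda\|\,O_P(1)$ bound and yields $\hat p_n-p_n\npconv 0$.

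The work is therefore concentrated in establishing the Lipschitz estimate. I would prove it by bounding the partial derivatives $\partial H/\partial\lambda_k$ uniformly in $q$ and in $\lambda\in N$. For each $k$, decompose $W(\lambda)=\sum_j\lambda_j Z_j^2$ as $\lambda_k Z_k^2+W_{-k}(\lambda_{-k})$ with $W_{-k}(\lambda_{-k})=\sum_{j\ne k}\lambda_j Z_j^2$, independent of $Z_k$. Conditioning on $Z_k$ gives $H(q;\lambda)=E[H_{-k}(q-\lambda_k Z_k^2;\lambda_{-k})]$, and a routine differentiation under the expectation yields
\[
\frac{\partial H}{\partial\lambda_k}(q;\lambda)=-E\!\left[Z_k^2\,h_{-k}(q-\lambda_k Z_k^2;\lambda_{-k})\right],
\]
where $h_{-k}$ is the density of $W_{-k}(\lambda_{-k})$ (for $d=1$ one instead uses $H(q;\lambda_1)=F_{\chi^2_1}(q/\lambda_1)$ directly, and the elementary bound $\sup_{x>0} x\,f_{\chi^2_1}(x)<\infty$). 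For $d\ge 2$ the density $h_{-k}$ is either bounded (if $d\ge 3$, since a convolution of $\ge 2$ scaled-$\chi^2_1$ densities is bounded on compacts in $\lambda_{-k}$) or, for $d=2$, $h_{-k}$ is a single scaled $\chi^2_1$ density and one uses $E[Z_k^2\,h_{-k}(q-\lambda_k Z_k^2)]\le C\sup_w h_W(w;\lambda)$ via the convolution identity $h_W=h_{-k}*f_{\lambda_k Z_k^2}$ together with the explicit fact that the $\chi^2_2$-type density is bounded. In every case the bound is uniform over $q\in\mathbb{R}$ and over $\lambda\in N$, which by the mean value inequality applied coordinatewise gives the desired Lipschitz estimate.

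The main obstacle is this uniform-in-$q$ control of the partial derivatives: a naive coupling bound like $|W(\lambda)-W(\tilde\lambda)|\le\sum_j|\lambda_j-\tilde\lambda_j|Z_j^2$ combined with an anti-concentration estimate for $W(\lambda)$ only delivers Hölder continuity with exponent $2/3$, which would prove $o_P(1)$ but not the sharper $\|\hat\lambda-\lambda\|\,O_P(1)$ rate asserted in the theorem. The derivative route above avoids this loss, at the cost of separate treatment (or a single unified computation exploiting the explicit chi-square density) of the small-$d$ boundary cases where the convolution involves few terms.
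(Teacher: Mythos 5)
Your proposal is correct in outline and rests on the same key lemma as the paper's proof: a bound on the partial derivatives $\partial H(q;\lambda)/\partial\lambda_k$ that is uniform in $q$ and locally uniform in $\lambda$ over a compact set bounded away from zero, combined with a mean-value/Lipschitz step and the observation that $\hat\lambda$ eventually lies in such a neighborhood with probability tending to one. Where you genuinely differ is in how that derivative is controlled. The paper works directly with the density of the sum: it writes $h_S$ as a $d$-fold convolution, differentiates the density of the single summand $\lambda_d Z_d^2$ in $\lambda_d$, and observes that the result is a fixed linear combination of a scaled $\chi^2_1$ density and two Gamma densities whose coefficients are negative powers of $\lambda_d$; integrating in $q$ then expresses $\partial H/\partial\lambda_d$ as a combination of cumulative distribution functions, each trivially bounded by one. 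You instead condition on $Z_k$ to get $\partial H/\partial\lambda_k(q;\lambda) = -E\left[Z_k^2\,h_{-k}(q-\lambda_k Z_k^2)\right]$ and bound the expectation. Your route is more probabilistic and avoids the Gamma-density bookkeeping; the paper's route avoids your case split on $d$, since it never needs $h_{-k}$ to be a bounded function.

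Two points in your sketch need tightening. First, for $d=2$ the interchange of $\partial/\partial\lambda_k$ with the expectation is not ``routine'': $h_{-k}$ is then a single scaled $\chi^2_1$ density, unbounded at the origin, and the natural dominating function $\sup_{\lambda_k}Z_k^2 h_{-k}(q-\lambda_k Z_k^2)$ is infinite on a set of positive measure, so plain dominated convergence fails; one should instead check continuity of $t\mapsto E[Z_k^2 h_{-k}(q-tZ_k^2)]$ and invoke Tonelli together with the fundamental theorem of calculus. Second, your stated $d=2$ bound via $\sup_w h_W(w;\lambda)$ does not follow from the convolution identity as written, because of the extra $Z_k^2$ weight in the integrand. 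The clean fix, which handles every $d\ge 2$ at once and removes the case split entirely, is to note that $y\mapsto y\,f_{\chi^2_1}(y)$ is bounded on $(0,\infty)$, whence $E[Z_k^2 h_{-k}(q-\lambda_k Z_k^2)]\le \lambda_k^{-1}\sup_{y>0}\bigl(y f_{\chi^2_1}(y)\bigr)$ after the substitution $u=q-\lambda_k y$ and using that $h_{-k}$ integrates to one. With those repairs your argument is complete and, if anything, shorter than the paper's.
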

\begin{proof}
	See Appendix \ref{section::proof}.
\end{proof}

We see that the $\sba$ procedure is a valid large sample approximation to $p_n$ if $\lambda = c \cdot (1, 1, \ldots, 1)$. If this is true in the population, Theorem \ref{theorem::consistency} implies the consistency of the $\sba$ procedure. 
The only crucial assumption of the theorem is that each $\lambda_j > 0$. Recall that in goodness of fit testing in SEM, we are guaranteed $d$ non-zero eigenvectors by the Box Theorem, see the discussion near eq.~\eqref{equ::box}. Hence this assumption is innocuous. 

A direct consequence of Theorem \ref{theorem::consistency} is that $\hat p_n$ fulfills the following property considered fundamental to p-values.
\begin{corollary}
Suppose the conditions of Theorem \ref{theorem::consistency} holds. 
If $T_n \ndconv \sum_{j=1}^d \lambda_j Z_j^2$ for $Z_1, \ldots, Z_d \sim N(0,1)$ IID, then
$
\hat p_n \ndconv U[0,1].
$
\end{corollary}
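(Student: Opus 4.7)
The plan is to reduce the corollary to two ingredients: the probability integral transform applied to the oracle $p$-value $p_n$, and Slutsky's theorem to transfer the resulting weak convergence to $\hat p_n$. The key observation is that $H(\cdot;\lambda)$ is the CDF of $W := \sum_{j=1}^d \lambda_j Z_j^2$, and since each $\lambda_j > 0$, this distribution is absolutely continuous on $(0,\infty)$ (it is a positive linear combination of independent $\chi^2_1$ variables, hence has a continuous density). In particular, the map $q \mapsto H(q;\lambda)$ is continuous on $\mathbb{R}$.

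First I would establish that $p_n \ndconv U[0,1]$. By hypothesis $T_n \ndconv W$, and since $H(\cdot;\lambda)$ is continuous, the continuous mapping theorem gives
\[
p_n = 1 - H(T_n;\lambda) \ndconv 1 - H(W;\lambda).
\]
Because $W$ has a continuous distribution function $H(\cdot;\lambda)$, the probability integral transform tells us that $H(W;\lambda) \sim U[0,1]$, and hence $1 - H(W;\lambda) \sim U[0,1]$ as well. Thus $p_n \ndconv U[0,1]$.

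Next, I would invoke Theorem \ref{theorem::consistency}: the assumption $\hat\lambda \npconv \lambda$ yields $\hat p_n - p_n \npconv 0$. Writing $\hat p_n = p_n + (\hat p_n - p_n)$ and applying Slutsky's theorem then gives $\hat p_n \ndconv U[0,1]$, which is the desired conclusion.

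There is essentially no obstacle here — the only subtle point worth stating explicitly is why $H(\cdot;\lambda)$ is continuous, which follows from the positivity assumption $\lambda_j > 0$ (ensuring that $W$ has no atoms). Without that assumption one could have $d = 0$ weights equal to zero and corresponding degenerate behavior, but under the hypotheses of Theorem \ref{theorem::consistency} the argument is completely clean.
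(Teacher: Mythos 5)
Your proof is correct and follows essentially the same route as the paper's: establish $p_n \ndconv U[0,1]$ and then transfer this to $\hat p_n$ via Theorem \ref{theorem::consistency} and Slutsky's theorem. The only difference is that you spell out the continuous-mapping/probability-integral-transform argument for $p_n \ndconv U[0,1]$ (including why positivity of the $\lambda_j$ makes $H(\cdot;\lambda)$ continuous), a step the paper simply asserts.
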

\begin{proof}
	Since $\eqref{mixture}$ holds, it follows that $p_n \ndconv U[0,1]$. Then the corollary follows  from the standard asymptotic result that if $X_n - Y_n = o_P(1)$ and $X_n \ndconv Z$ then also $Y_n \ndconv Z$.
\end{proof}

From our perspective of aiming at consistent p-values, the $\sba$ procedure is well motivated under an equality constraint among all eigenvalues. 
But if the eigenvalues differ considerably in the population, this restriction may lead to poor estimates due to a high bias.
In contrast, $\hat p_n$ is always a valid approximation for $p_n$ in that it is consistent -- and hence asymptotically unbiased. However, in finite samples the variability of $\hat \lambda$ may lead to excessive variability in $\hat p_n$. 
We therefore wish to find middle-grounds between the SB approximation and $\hat p_n$. 
This amounts to using the consistent estimates 
$\hat \lambda_j$ to calculate new weights that may reduce the sample variability of $p_n$, and 
at the same time reduce the effect of inconsistency in  SB. 
Consider for instance the following split-half approximation, where the lower half of the eigenvalues are replaced by their mean value, and likewise for the upper half of the eigenvalues.  
\begin{equation*} 
\hat p_{n, half} =P \left( \sum_{j=1}^d \tilde \lambda_j Z_j^2 > T_n \right),
\end{equation*}
where
\[
\tilde \lambda_1 = \cdots = \tilde \lambda_{\lceil d/2 \rceil} = \frac{1}{\lceil d/2 \rceil} \sum_{j=1}^{\lceil d/2 \rceil} \hat \lambda_j
\]
and 
\[
\tilde \lambda_{\lceil d/2 \rceil+1} = \cdots = \tilde \lambda_{d} = \frac{1}{d - \lceil d/2 \rceil} \sum_{j=\lceil d/2 \rceil+1}^{d} \hat \lambda_j.
\]
This procedure allows the p-value approximation an additional degree of freedom compared to the SB statistic, where all eigenvalues are estimated to be equal to each other.
A whole class of middle-grounds between the full $\hat p_n$ and $\hat p_{n, \text{SB}}$ can be defined as follows. Choose cut-off integers $1 < \tau_1 < \tau_2 < \cdots < \tau_k < d$ with $1 \leq k < d$. For $\tau_{l-1} \leq k < \tau_{l}$ let
\begin{equation} \label{equ::tildelambda}
\tilde \lambda_k = \frac{1}{\tau_{l}-\tau_{l-1}} \sum_{j=\tau_{l-1}}^{\tau_{l}-1} \hat \lambda_j
\end{equation}
where $\tau_0 = 1$ and $\tau_{k+1} = d$.
Let us denote this choice by $\tilde \lambda(\tau)= (\tilde \lambda_1(\tau), \ldots, \tilde \lambda_r(\tau))'$. The proposed $p$-value estimator is then
\[
\hat p_n(\tau) = P \left( \sum_{j=1}^d \tilde \lambda_j (\tau) Z_j^2 > T_n \right).
\]

An extension of the above framework is tests that assess nested hypotheses in SEM. Due to its great practical importance, we here include a short discussion on this special case. We again focus on the statistic $T_n$, since this statistic is typically asymptotically equivalent to other tests of interests, as described in \citet{satorra1989alternative}.

Following \citet{satorra1989alternative}, let $H : \sigma = \sigma(\theta), \theta \in \Theta$ and $H_0 : \sigma = \sigma(\theta), \theta \in \Theta_0$ where $\Theta_0 = \{ \theta \in \Theta : a(\theta) = 0 \}$ for some continuously differentiable function $a$. We assume that the matrix $\frac{\partial a (\theta)}{\partial \theta}$ has full row rank, say $m$. We let
\[
\hat \theta = \argmin_{\theta \in \Theta} F(s, \sigma(\theta)), \qquad \tilde \theta = \ \argmin_{\theta \in \Theta_0} F(s, \sigma(\theta))
\]
and $T_n = n F(s, \sigma(\hat \theta)  )$ and $\tilde T_n = n F(s, \sigma(\tilde \theta))$. 
Under $H_0$ and the conditions of Lemma 1 (iv) in \citet{satorra1989alternative} we have 
\begin{align*}
T_n &= \sqrt{n} (s - \sigma^\circ)' U \sqrt{n} (s - \sigma^\circ) + o_P(1) \\
\tilde T_n &= \sqrt{n} (s - \sigma^\circ)' \tilde{U} \sqrt{n} (s - \sigma^\circ) + o_P(1),
\end{align*}
for matrices $U$ and $\tilde{U}$ following the formula of eq.~\eqref{equ::Udef} under $H$ and $H_0$,  respectively.  Using the basic algebraic fact that $x' (A+B)x = x'Ax + x'Bx$ we conclude that the difference statistic is of the form
\[
\tilde T_n - T_n = \sqrt{n} (s - \sigma^\circ)' U_d \sqrt{n} (s - \sigma^\circ) + o_P(1), 
\]
where $U_d =   \tilde{U} - U$ has rank $m$. 

By the continuous mapping theorem, the convergence $\sqrt{n} (s - \sigma^\circ)  \ndconv N(0, \Gamma)$, and Theorem 1 in \cite{Box}, we therefore have that
\begin{equation} \label{equ::diffTest}
\tilde T_n - T_n \ndconv \sum_{j=1}^m \alpha_j Z_j^2, \qquad Z_1, \ldots, Z_m \sim N(0,1) \text{ IID},
\end{equation}
where $\alpha_1, \ldots, \alpha_m$ are the $m$ non-zero eigenvalues of $U_d \Gamma$.

Distribution-free consistent estimators $\hat U_d$ and $ \hat \Gamma$ for $U_d$ and $ \Gamma$ are found and discussed in \citet{satorra2001scaled}, and we do not review them here. Again the standard estimators can be found in software such as the \textsf{R} package lavaan. One then forms $\hat \alpha = (\hat \alpha_1, \ldots, \hat \alpha_m)'$ equal to the $m$ largest eigenvectors of $\hat U_d \hat \Gamma$ and calculates the full p-value approximation
\[
\hat p_n = P \left( \sum_{j=1}^m \hat \alpha_j Z_j^2 > \tilde T_n - T_n\right).
\]

We remark that a single equality constraint, say, $\beta_{i,j} = 0$, can be treated as special case of the above framework. In this case, the number of restrictions is $1$, and hence the limiting distribution in eq.~\eqref{equ::diffTest} is a scaled $\chi_1^2$. The SB and the proposed $p$-value approximations then coincide exactly. 
Note that Theorem \ref{theorem::consistency} implies that these procedures are consistent.

\section{A selection algorithm for p-value approximations} \label{section::selection}

The framework of the last section leads to several competing p-value approximations, 
and we next introduce a way of selecting among these. Our selector is inspired by \citet{beran1985bootstrap}, the Bollen-Stine bootstrap \citep{bollen1992bootstrapping}, and the non-parametric focused information criterion of \citet{jullum2016}. 

We wish to select the p-value approximation $\hat p_n$ whose distribution is closest to the uniform distribution under the null hypothesis. We formalize this by estimating the supremum distance between the cumulative distribution function of $\hat p_n$ under the null hypothesis and the uniform distribution, i.e. we approximate
\[
D_n = \sup_{0 \leq x \leq 1} |P_{H_0}(\hat p_n \leq x) - x|
\]
for each p-value approximation, and select the method with the least value of $D_n$. The probability $P_{H_0}$ is the probability measure induced by the data-generating distribution that is closest to fulfilling the null hypothesis compared to the true data-generating mechanism, which we let be the data generating distribution of $\Sigma(\theta^\circ)^{1/2} \Sigma^{-1/2} X_i$, where $\Sigma$ is the true covariance matrix. Under $P_{H_0}$, we know that p-values should be uniformly distributed. If we consider asymptotically consistent p-values, minimizing $D_n$ will mean that we choose the approximation whose convergence has been best achieved at our sample-size $n$.

The approximation to $D_n$ is done via the non-parametric bootstrap, based on the transformed sample
$\tilde X_i = \Sigma(\hat \theta)^{1/2} S_n^{-1/2} X_i$ for $i= 1, 2, \ldots, n$, as described in Algorithm \ref{alg::select}. The supremum in Algorithm \ref{alg::select} is the test statistic of the Kolmogorov-Smirnov test, which is implemented in most statistical software packages.
Formally, what we do is to use the empirical distribution function $\hat P_n$ of $(\tilde X_i)$ as an approximation to $P_{H_0}$, and approximate this probability through re-sampling. We then plug this approximation into $D_n$ to generate $\hat D_n$ for each p-value approximation. 

We note that we may use this selector among any p-value approximation for hypothesis testing in moment structures, and not just the suggestions in Section \ref{section::testprocedure}. 
Also,  $D_n$ is only one out of many possible success criteria. One could also investigate the mean square error of the approximation, or the distance from $P_{H_0}(\hat p_n \leq x) $ to $x$ at a particular point $x$. In our simulations, the performance of $D_n$ as a selection criterion was overall satisfactory.

\begin{algorithm*}[ht!]
	\caption{Selection algorithm}\label{alg::select}
	\begin{algorithmic}[1]
		\Procedure{Select}{sample, B}
		\State $\tilde X_i = \Sigma(\hat \theta)^{1/2} S_n^{-1/2} X_i$ for $i = 1, 2, \ldots, n$.
		\For{$ k \gets 1, \ldots, B$}
		\State boot.sample $\gets$ Draw with replacement from transformed sample $\tilde X_i$
		\For {$ l \in 1, \ldots, L$}
		\State $\hat p_{n,l} \gets$ based on boot.sample
		\EndFor
		\EndFor
		\For {$ l \in 1, \ldots, L$}
		\State $\hat D_{B,n,l} \gets \sup_{0 \leq x \leq 1} |B^{-1} \sum_{k=1}^{B} I \{ \hat p_{n,l} < x \} - x|$
		\EndFor
		\State \textbf{return } $\argmin_{1 \leq l \leq L} \hat D_{B,n,l}$
		\EndProcedure
	\end{algorithmic}
\end{algorithm*}

\section{Hypothesis tests for Satorra-Bentler consistency\\ and asymptotic robustness}\label{section::ar}

In this section we propose a bootstrap procedure for testing Satorra-Bentler consistency, that is, that all non-zero eigenvalues are equal. 
This also leads naturally to a test for asymptotic robustness (AR), that is, that all non-zero eigenvalues are equal to $1$. Such tests may help a practitioner to decide whether it is advisable to apply the NTML test, the SB test, or to instead use the Bollen-Stine bootstrap or the new procedures proposed in the present article. 

There is a substantial body of theoretical literature on AR \citep{Shapiro87, Browne88, Amemiya:1990wi, satorra1990model}, where
exact conditions are given that involve certain relationships  between $\Gamma$ and $\Delta$ that must hold for $\mlt$ to retain its asymptotic chi-square distribution under non-normality. However, these conditions 
are hard to check in practice, and currently no practical procedure exist for
verifying asymptotic robustness  in a real-world setting \cite[p. 118]{Yuan:2005p1088}.
Similarly, we unaware of the existence of tests for SB consistency. 
This lack of tests might be due to the fact  
 that testing statements concerning the eigenvalues of $U \Gamma$ involves testing statements about high moment properties of a distribution. Without detailed parametric assumptions on the data  it seems very difficult to construct tests that perform well in small-sample situations. It is therefore expected that our proposed procedures will require a large sample size to attain Type I error rates close to the nominal level. 
This is confirmed to be the case in the simulation experiment in Section \ref{section::bootstrapSim}.

The proposed bootstrap test is summarized in Algorithm \ref{alg::bootstrapSB} and is inspired by Section 4.2 in \citet{beran1985bootstrap}. A proof of its consistency, which we do not provide, seems to require a non-trivial extension of the theory contained in \citet{beran1984bootstrap, beran1985bootstrap}. 
We note that an important difference between our suggested test and the procedures in Section 4.2 in \citet{beran1985bootstrap}, who work with eigenvalues of empirical covariance (i.e., symmetric) matrices, is that $U \Gamma$ is typically not symmetrical.

Let $E$ be the matrix of normalised (complex) eigenvectors sorted by descending values of the eigenvalues $\lambda$  of $U \Gamma$. We have
$
U \Gamma = E \Delta E^\inv
$
\citep[p.514]{meyer2000matrix}
where
$
\Delta = \begin{pmatrix} \Delta_d & 0 \\ 0 & 0 \end{pmatrix},
$
and where $\Delta_d$ is the diagonal matrix with elements $\lambda_1, \ldots, \lambda_d$. Define
\begin{equation} \label{eq::A}
A = c^{1/2} \cdot E \begin{pmatrix} \Delta_d^{-1/2} & 0 \\ 0 & 0 \end{pmatrix} E^\inv,
\end{equation}
where $c$ denotes the mean value of the eigenvalues $\lambda_1, \ldots, \lambda_d$.
We propose the following bootstrap procedure. Let $\hat A$ 
be estimated from the original sample, by replacing $E, \Delta, c$ with $\hat E, \hat \Delta, \hat c$. 
For each bootstrap sample drawn from the original sample, we 
calculate
$\hat U_\text{boot} \hat \Gamma_\text{boot}$ and form the matrix
\[
W_n^* =  \hat A \hat U_\text{boot} \hat \Gamma_\text{boot} \hat A.
\]
The crucial observation is now that  $W_n^*$ converges to a matrix for which the null-hypothesis is true, that is, whose non-zero eigenvalues are all equal. To see this, note that
\begin{align*}
W_n^* & \npconv c E \begin{pmatrix} \Delta_d^{-1/2} & 0 \\ 0 & 0 \end{pmatrix} E^\inv \cdot U \Gamma \cdot E \begin{pmatrix} \Delta_d^{-1/2} & 0 \\ 0 & 0 \end{pmatrix} E^\inv  \\ &=  c E \begin{pmatrix}  \Delta_d^{-1/2} & 0 \\ 0 & 0 \end{pmatrix} E^\inv E \Delta E^\inv E \begin{pmatrix} \Delta_d^{-1/2}   & 0 \\ 0 & 0 \end{pmatrix} E^\inv \\
&= cE \begin{pmatrix}  \Delta_d^{-1/2} & 0 \\ 0 & 0 \end{pmatrix} \begin{pmatrix} \Delta_d & 0 \\ 0 & 0 \end{pmatrix} \begin{pmatrix} \Delta_d^{-1/2}   & 0 \\ 0 & 0 \end{pmatrix} E^\inv  = E \begin{pmatrix} c I_d & 0 \\ 0 & 0 \end{pmatrix} E^\inv,
\end{align*}
where the last matrix has $d$ non-zero eigenvalues equal to $d$. 
In the bootstrap sample, 
the $d$ largest eigenvalues of $W_n^*$ is then computed as $\hat \lambda_\text{boot}$. This process is repeated many times, and we get realizations $\hat \lambda_{k, \text{boot}}$, 
giving us information about the sampling variability of the estimated eigenvalues under the null hypothesis of identical eigenvalues. 

The above procedure may also be adapted to test for asymptotic robustness of the NTML statistic $\mlt$, that is, whether  $\lambda_j=1$ for all $j=1,\ldots, d$. 
 By setting $c=1$ in eq.~\eqref{eq::A}, Algorithm \ref{alg::bootstrapSB} then produces a p-value for the test of consistency of $\mlt$ based testing. Since this test does not need to estimate $c$, it should converge to the correct level I error rate slightly faster than the general case. 
The test statistic that is bootstrapped is then
\[
W_n^* =  \hat A_1 \hat U_\text{boot} \hat \Gamma_\text{boot} \hat A_1.
\]
where $\hat A_1$ is the estimator of $A_1 = E \begin{pmatrix} \Delta_d^{-1/2} & 0 \\ 0 & 0 \end{pmatrix} E^\inv$.

We suppose that an extension of Corollary 4 in \citet{beran1985bootstrap} holds also in our setting. That corollary requires the test statistic $h(\lambda)$ to be non-negative and zero under the null hypothesis, and that it has partial derivatives that are zero under the null hypothesis and that its double derivative matrix is positive definite under the null hypothesis. The additional restriction that also the partial derivatives vanish under the null-hypothesis means we must consider two different test statistics, adapted from the examples in Section 4.3 in \citet{beran1985bootstrap}.
For asymptotic robustness, this holds for $h_{AR}(\lambda) = d \log [d^\inv \sum_{j=1}^d \lambda_j] - \log [\prod_{j=1}^d \lambda_j]$.
For Satorra-Bentler consistency, this holds for $h_{SB}(\lambda) = \log [(\lambda_1 + \lambda_d)^2] - \log [4 \lambda_1 \lambda_d].$
Algorithm \ref{alg::bootstrapSB} summarizes this discussion.

\begin{algorithm*}
	\caption{Bootstrap testing for Satorra-Bentler consistency and Asymptotic Robustness} \label{alg::bootstrapSB}
	\begin{algorithmic}[1]
		\Procedure{Bootstrap}{sample, B}
		\State Calculate $\hat U,  \hat \Gamma, \hat A, \hat A_1$	from sample
		\State $\hat \lambda \gets$ The $d$ largest eigenvalues of $\hat U \hat \Gamma$	
		\State $T_{n, \text{SB}} = h_{SB}(\hat \lambda)$
		\State $T_{n, \text{AR}} = h_{AR}(\hat \lambda)$
		\For{$ k \gets 1, \ldots, B$}
		\State boot.sample $\gets$ Draw with replacement from sample
		\State $\hat U_\text{boot} \hat \Gamma_\text{boot} \gets$ Based on boot.sample
		\State $W_{n, \text{SB}}^*\gets \hat A \hat U_\text{boot} \hat \Gamma_\text{boot} \hat A$
		\State $\hat \lambda_{k,\text{boot}} = (\hat \lambda_{k,1,\text{boot}}, \ldots, \hat \lambda_{k,d,\text{boot}})' \gets$ the $d$ largest eigenvalues of $W_{n, \text{SB}}^*$  
		\State $T_{n, k, \text{SB}} \gets h_{SB}(\hat \lambda_{k,\text{boot}})$
		\State $W_{n, \text{AR}}^*\gets \hat A_1 \hat U_\text{boot} \hat \Gamma_\text{boot} \hat A_1$
		\State $\hat \lambda_{k,\text{boot}} = (\hat \lambda_{k,1,\text{boot}}, \ldots, \hat \lambda_{k,d,\text{boot}})' \gets$ the $d$ largest eigenvalues of $W_{n, \text{AR}}^*$  				
		\State $T_{n, k, \text{AR}} \gets h_{AR}(\hat \lambda_{k,\text{boot}})$
		\EndFor
		\State \textbf{return } $B^\inv \sum_{k=1}^B I \{ T_{n, k, \text{SB}} > T_{n, \text{SB}} \}$ and $B^\inv \sum_{k=1}^B I \{ T_{n, k, \text{AR}} > T_{n, \text{AR}} \}$
		\EndProcedure
	\end{algorithmic}
\end{algorithm*}

\section{Monte Carlo evaluations} \label{section::montecarlo}

In this section we evaluate the proposed procedures by Monte Carlo methods. We first evaluate our new class of p-value approximations in the setting of goodness-of-fit testing for a single model. Specifically, two members of this class are evaluated, $\hat p_n$ and $\hat p_{n, \text{half}}$, referred to as the full and half eigenvalue approximations, respectively. 
We then consider chi-square difference testing for two nested models. In both cases we  evaluate the selection procedure in Algorithm 
\ref{alg::select}
where the candidates for selection are SB and the full and half eigenvalue approximations. 
Finally, we evaluate the empirical performance of the proposed bootstrap test for SB consistency and for AR. 
We remark that  we here limit ourselves  to study the empirical performance of the procedures when it comes to controlling type I error rates, leaving  the topic of power for future studies.

Our model is the  political democracy model 
discussed by Bollen in his textbook  \citep{bollen},  see Figure \ref{fig::bollen},  where the residual errors are not depicted for ease of presentation. There are four measures of political democracy measured twice (in 1960 and 1965),  and three measures of industrialization measured once (in 1960). The unconstrained model $\mathcal{M}_1$ has $d=35$ degrees of freedom. For nested model testing, we also consider a constrained model $\mathcal{M}_0$, nested within $\mathcal{M}_1$, 
with $d=46$ degrees of freedom,  which impose ten equalities among unique variances
and residual covariances, and one equality between two factor loadings.

Model estimation and eigenvalues were computed using the \textsf{R} package lavaan \citep{rosseel2012lavaan}, while
the p-values of type $\hat p_n$ were calculated with the imhof procedure in the package CompQuadForm \citep{Duchesne:2010fk}. 
In each simulation cell we 
generated $2000$ samples. 
For each sample, $1000$ bootstrap samples were drawn.

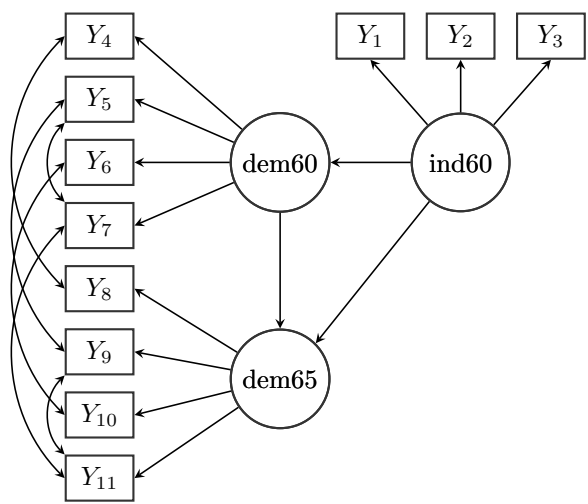
\begin{figure}[ht]
	\caption{Bollen's political democracy model. dem60: Democracy in 1960. dem65: Democracy in 1965. ind60: Industrialisation in 1960.}
	\centering
	
	\begin{tikzpicture}[>=stealth,semithick,scale=0.6]
	\node[ov] (y1) at (0,0)     { $Y_4$};
	\node[ov] (y2) at (0,-1.4)  { $Y_5$};
	\node[ov] (y3) at (0,-2.8)  {$Y_6$};
	\node[ov] (y4) at (0,-4.2)  { $Y_7$};
	\node[ov] (y5) at (0,-5.6)  { $Y_8$};
	\node[ov] (y6) at (0,-7)    { $Y_9$};
	\node[ov] (y7) at (0,-8.4)  { $Y_{10}$};
	\node[ov] (y8) at (0,-9.8)  { $Y_{11}$};
	\node[ov] (x1) at ( 6,0)     { $Y_1$};
	\node[ov] (x2) at ( 8,0)     { $Y_2$};
	\node[ov] (x3) at (10,0)     { $Y_3$};
	\node[lv] (f1) at (4, -2.8) { dem60};
	\node[lv] (f2) at (4, -7.6) { dem65};
	\node[lv] (f3) at (8, -2.8) { ind60};
	\node[lv] (f1) at (4, -2.8) {dem60};
	\node[lv] (f2) at (4, -7.6) {dem65};
	\node[lv] (f3) at (8, -2.8) {ind60};
	\path[->] (f1) edge node[above=0.08cm,scale=0.8,pos=0.7] {} (y1.east);
	\path[->] (f1) edge node[above=0.08cm,scale=0.8,pos=0.7] {} (y2.east);
	\path[->] (f1) edge node[above=0.08cm,scale=0.8,pos=0.7] {} (y3.east);
	\path[->] (f1) edge node[above=0.08cm,scale=0.8,pos=0.7] {} (y4.east);
	\path[->] (f2) edge node[above=0.08cm,scale=0.8,pos=0.7] {} (y5.east);
	\path[->] (f2) edge node[above=0.08cm,scale=0.8,pos=0.7] {} (y6.east);
	\path[->] (f2) edge node[above=0.08cm,scale=0.8,pos=0.7] {} (y7.east);
	\path[->] (f2) edge node[above=0.08cm,scale=0.8,pos=0.7] {} (y8.east);
	\path[->] (f3) edge node[above=0.08cm,scale=0.8,pos=0.7] {} (x1.south);
	\path[->] (f3) edge node[above=0.08cm,scale=0.8,pos=0.7] {} (x2.south);
	\path[->] (f3) edge node[above=0.08cm,scale=0.8,pos=0.7] {} (x3.south);
	\path[->] (f1) edge node[above=0.08cm,scale=0.8,pos=0.7] {} (f2);
	\path[->] (f3) edge node[above=0.08cm,scale=0.8,pos=0.7] {} (f1.east);
	\path[->] (f3) edge node[above=0.08cm,scale=0.8,pos=0.7] {} (f2.north east);
	\path[<->] (y1.west) edge [bend right=45] node[left,scale=0.8] {} (y5.west);
	\path[<->] (y2.west) edge [bend right=45] node[left,scale=0.8] {} (y6.west);
	\path[<->] (y3.west) edge [bend right=45] node[left,scale=0.8] {} (y7.west);
	\path[<->] (y4.west) edge [bend right=45] node[left,scale=0.8] {} (y8.west);
	\path[<->] (y2.south west) edge [bend right=45] node[left,scale=0.8] {} (y4.north west);
	\path[<->] (y6.south west) edge [bend right=45] node[left,scale=0.8] {} (y8.north west);
	\end{tikzpicture}
	
	\label{fig::bollen}
\end{figure}

\subsection{Goodness-of-fit testing for $\mathcal{M}_1$ }
\begin{table}[ht]
	\centering
	\begin{tabular}{lrrrrrrr}
		\multirow{5}{*}{Distribution 2}	&	1.87 & 1.59 & 1.49 & 1.44 & 1.43 & 1.42 & 1.38 \\ 
		&1.36 & 1.35 & 1.34 & 1.31 & 1.29 & 1.26 & 1.13 \\ 
		&1.12 & 1.11 & 1.11 & 1.10 & 1.10 & 1.09 & 1.09 \\ 
		&1.08 & 1.08 & 1.07 & 1.07 & 1.07 & 1.06 & 1.05 \\ 
		&1.04 & 1.03 & 1.03 & 1.03 & 1.02 & 1.02 & 1.01 \\ \hline
		\multirow{5}{*}{Distribution 3}	&4.16 & 3.24 & 2.88 & 2.82 & 2.70 & 2.67 & 2.51 \\ 
		&	2.41 & 2.35 & 2.31 & 2.16 & 2.12 & 2.03 & 1.52 \\ 
		&1.50 & 1.47 & 1.43 & 1.40 & 1.38 & 1.36 & 1.35 \\ 
		&	1.33 & 1.32 & 1.29 & 1.27 & 1.25 & 1.21 & 1.20 \\ 
		&	1.13 & 1.13 & 1.11 & 1.09 & 1.08 & 1.08 & 1.06 
	\end{tabular}
	\caption{Eigenvalues $\lambda_i$, for $i=1, \ldots, 35$,  for Bollen's political democracy model, assuming   correct model specification. Distribution 2 and 3 have univariate skewness and kurtosis $s=1, k=7$ and $s=2, k=21$, respectively.}
	\label{tab::eigen}
\end{table}

\begin{table}[ht]
	\centering
	\begin{tabular}{llrrrrrrrr}
		Distribution & n & NTML & SB & SS & BOST & EFULL & EHALF & SEL & ORAC \\ \hline
		\multirow{3}{*}{Normal} & 100 & 0.077 & 0.086 &   0.050 & 0.023 & 0.036 & 0.050 & 0.051 & 0.077 \\ 
		& 300 & 0.055 & 0.053 &  0.052  &0.037 & 0.037 & 0.043 & 0.045 & 0.055 \\ 
		& 900 & 0.068 & 0.067 & 0.050  &   0.059 & 0.063 & 0.064 & 0.065 & 0.068 \\ \hline
		\multirow{3}{*}{Distribution 2} & 100 & 0.215 & 0.108 & 0.019  & 0.035 & 0.021 & 0.048 & 0.042 & 0.057 \\ 
		& 300 & 0.197 & 0.070 &  0.018  & 0.053  &0.024 & 0.045 & 0.045 & 0.057 \\ 
		& 900 & 0.219 & 0.063 &    0.033& 0.054 & 0.037 & 0.051 & 0.051 & 0.059 \\ \hline
		\multirow{3}{*}{Distribution 3} & 100 & 0.488 & 0.164 & 0.017   & 0.038 & 0.009 & 0.072 & 0.031 & 0.024 \\ 
		& 300 & 0.591 & 0.094 &   0.013& 0.068 & 0.013 & 0.050 & 0.038 & 0.045 \\ 
		& 900 & 0.685 & 0.076 &    0.017 & 0.059 & 0.015 & 0.042 & 0.038 & 0.046 \\ \hline
	\end{tabular}
	\caption{Type I error rates for testing model $\mathcal{M}_1$.
		Normal: multivariate normal distribution, Distribution 2: skewness $1$ and kurtosis $7$. Distribution 3: skewness $2$ and kurtosis $7$. NTML=normal-theory likelihood ratio test. SB=Satorra-Bentler. SS=Scaled and shifted.  BOST=Bollen-Stine bootstrap. EFULL= Full eigenvalue approximation, $\hat p_n$. EHALF= half eigenvalue approximation, $\hat p_{n,\text{half}}$. 
		SEL = p-value obtained from selection algorithm. ORAC= oracle p-value  $p_n$. }
	\label{tab::nonnestedNEW}
\end{table}

\begin{table}[ht]
	\centering
	\begin{tabular}{llrrr}
		Distribution & $n$ & SB & EHALF & EFULL \\ 
		\hline
		\multirow{3}{*}{Normal} & 100 & 0.054 & 0.931 & 0.015 \\ 
		& 300 & 0.448 & 0.516 & 0.036 \\ 
		& 900 & 0.507 & 0.263 & 0.231 \\  \hline
		\multirow{3}{*}{Distribution 3} & 100 & 0.001 & 0.865 & 0.135 \\ 
		& 300 & 0.050 & 0.894 & 0.055 \\ 
		& 900 & 0.153 & 0.783 & 0.063 \\ \hline
		\multirow{3}{*}{Distribution 3} & 100 & 0.000 & 0.449 & 0.551 \\ 
		& 300 & 0.001 & 0.733 & 0.267 \\ 
		& 900 & 0.004 & 0.846 & 0.150 \\ 
		\hline
	\end{tabular}
	\caption{Choice proportions for selection algorithm,  testing model $\mathcal{M}_1$. SB=Satorra-Bentler. EHALF=half eigenvalue approximation, $\hat p_{n,\text{half}}$. EFULL= Full eigenvalue approximation, $\hat p_n$.  }
	\label{tab::nonnestedcounts}
\end{table}

Three population distributions were considered.
Distribution 1 was a multivariate normal distribution. The non-normal distributions were generated using the transform of \cite{vale1983simulating}, with Distribution 2 having univariate skewness $1$ and kurtosis $7$, and Distribution 3 having skewness $2$ and kurtosis $21$.
These distributional characteristics are the same as those used in the influential study by \cite{Curran}, and  replicated in the bootstrap study by \cite{Nevitt:2001gg}.
 The "oracle" eigenvalues associated with Distribution 2 and 3 are given in Table \ref{tab::eigen},
numerically calculated from  very large samples, where we clearly see that the values 
are quite spread out, and that the spread increases when we  
move from Distribution 2 to Distribution 3. Note that under the Distribution 1, we have $\lambda = (1, 1, \ldots, 1)'$.

Three sample sizes $n$ were used: $100, 300$ and $900$. Hence the resulting full factorial design has nine conditions. In each sample we calculated p-values associated with the established test statistics associated with normal-theory maximum likelihood ratio (NTML), Satorra-Bentler scaling (SB), the scale-and-shifted statistic (SS) and the Bollen-Stine (BOST) test. 
Also, we calculated in each sample the full eigenvalue approximation $\hat p_n$ (EFULL) and the  split-half eigenvalue estimation $\hat p_{n, \text{half}}$ (EHALF).
The selection algorithm (SEL) p-value was calculated using a candidate set   with members  SB, EHALF and EFULL,  and using  $\hat D_n$ as criterion function. Finally, the oracle (ORAC) p-value $p_n$ was calculated, using the values in Table \ref{tab::eigen}.  
This allows us to 
evaluate  how well the asymptotic result in eq.\eqref{mixture} applies in  finite-sample conditions.

In Table \ref{tab::nonnestedNEW} we present Type I error rates at the the $5 \%$ significance level. 
As expected, NTML becomes inflated when data is non-normal. The mean-scaling of SB reduces the inflation, but
with  non-normal data and small sample sizes,  Type I error rates are still higher than $10\%$.
The scaled-and-shifted statistic on the other hand, leads to rejection rates much lower than the nominal  $5 \%$.
These findings are in accord with \cite{foldnes2015correcting}. The Bollen-Stine bootstrap test performs better than
SB and SS, coming close to the nominal level even for highly non-normal data and medium sample size. 
Among the new p-value approximations, it is the middle-ground approximation EHALF that performs the best.
While EFULL yields far too low rejection rates with non-normal data. EHALF as well as BOST with non-normal data. 
The selection algorithm SEL also performs generally well, on par with EHALF and BOST. 
It is notable that for normal data, SEL outperforms NTML.
Table \ref{tab::nonnestedcounts} presents the selection proportions for SEL in each of the nine conditions. 
It is seen that the selection algorithm wisely chooses EHALF in the majority of conditions. It is however unexpected that
SEL chooses EFULL in $55\%$ of the samples under Distribution 3 and $n=100$, given the poor performance of EFULL
in that condition, with a $1\%$ rejection rate. 
The final column in Table \ref{tab::nonnestedNEW} gives the oracle solution, and demonstrates that the asymptotic result
in \eqref{mixture} is far from realized at $n=100$  under Distribution 3.  

\subsection{Testing nested models.}

The chi-square difference test has $11$ degrees of freedom, and the corresponding $11$  
oracle eigenvalues for Distribution 2 and 3 are given in Table \ref{tab::eigendiff}.
The spread in eigenvalues is substantial, especially for Distribution 3. 
\begin{table}[ht]
	\centering
	\begin{tabular}{rrrrrrrrrrrr}
		Distribution 2 & 	3.92 & 3.49 & 3.19 & 2.99 & 2.94 & 2.78 & 2.72 & 1.85 & 1.56 & 1.54 & 1.30 \\ 
		Distribution 3 & 	10.64 & 8.79 & 8.06 & 7.58 & 7.37 & 6.94 & 6.76 & 4.09 & 3.16 & 3.10 & 2.04 \\   
	\end{tabular}
	\caption{Eigenvalues of $U_d \Gamma$ for nested model testing.   Distribution 2 has skewness $1$ and kurtosis $7$;  Distribution 3 has skewness $2$ and kurtosis $21$. Rounded to two decimal places.}
	\label{tab::eigendiff}
\end{table}

\begin{table}[ht]
	\centering
	\begin{tabular}{llrrrrrrr}
		Distribution & $n$ & ML & SB & BOST & EFULL & EHALF & SEL & ORAC \\ 
		\hline
		\multirow{3}{*}{Normal} & 100 & 0.068 & 0.080 & 0.037 & 0.062 & 0.069 & 0.075 & 0.068 \\ 
		& 300 & 0.054 & 0.059 & 0.046 & 0.053 & 0.055 & 0.058 & 0.054 \\ 
		& 900 & 0.051 & 0.053 & 0.051 & 0.051 & 0.052 & 0.053 & 0.051 \\ \hline
		\multirow{3}{*}{Distribution 2} & 100 & 0.582 & 0.137 & 0.096 & 0.076 & 0.099 & 0.096 & 0.028 \\ 
		& 300 & 0.659 & 0.088 & 0.081 & 0.052 & 0.066 & 0.062 & 0.035 \\ 
		& 900 & 0.702 & 0.059 & 0.053 & 0.035 & 0.043 & 0.045 & 0.046 \\ \hline
		\multirow{3}{*}{Distribution 3} & 100 & 0.911 & 0.221 & 0.129 & 0.115 & 0.159 & 0.135 & 0.005 \\ 
		& 300 & 0.961 & 0.126 & 0.118 & 0.062 & 0.089 & 0.082 & 0.018 \\ 
		& 900 & 0.976 & 0.087 & 0.089 & 0.044 & 0.064 & 0.061 & 0.043 \\ 
		\hline
	\end{tabular}
	\caption{Type I error rates for nested model testing. 
		Normal: multivariate normal distribution, Distribution 2: skewness $1$ and kurtosis $7$. Distribution 3: skewness $2$ and kurtosis $7$. NTML=normal-theory likelihood ratio test. SB=Satorra-Bentler.   BOST=Bollen-Stine bootstrap. EFULL= Full eigenvalue approximation, $\hat p_n$. EHALF= half eigenvalue approximation, $\hat p_{n,\text{half}}$. 
		SEL = p-value obtained from selection algorithm. ORAC= oracle p-value  $p_n$. }
	\label{tab::nestednew}
\end{table}

Rejection rates observed at the nominal $5\%$ level of signficance are reported in Table \ref{tab::nestednew}.
Again, the NTML statistic is inflated by non-normality in the data, a tendency only partially corrected for by SB.
For instance, under the most harsh condition, with Distribution 3 and $n=100$, SB rejection rates are $22\%$, far better
than the $91\%$ obtained with NTML. But in this condition, as in all conditions, BOST  performs better, with a rejection rate of $13\%$. However, the new procedure EFULL performs still better in this  condition, while the selection algorithm is only slightly worse than BOST. Overall EFULL outperforms the other test statistics, including SB and BOST. EHALF, which was found to have best performance in the non-nested case, does not
perform as well as EFULL in the nested case. The selection algorithm SEL also performs well, 
with better performance than SB and BOST in most conditions, 
and only sligthly worse then EFULL. The selection proportions are given in Table \ref{tab::nestedcounts}, 
where EHALF is unexpectedly found to be the most chosen procedure, despite the slightly better performance of EFULL in most conditions. 

\begin{table}[ht]
	\centering
	\begin{tabular}{llrrr}
		Distribution & $n$ & SB & EHALF & EFULL \\ 
		\hline
		\multirow{3}{*}{Normal} & 100 & 0.601 & 0.357 & 0.042 \\ 
		& 300 & 0.672 & 0.205 & 0.122 \\ 
		& 900 & 0.593 & 0.091 & 0.316 \\ \hline
		\multirow{3}{*}{Distribution 3} & 100 & 0.116 & 0.714 & 0.170 \\ 
		& 300 & 0.209 & 0.662 & 0.128 \\ 
		& 900 & 0.263 & 0.595 & 0.142 \\ \hline
		\multirow{3}{*}{Distribution 3} & 100 & 0.012 & 0.663 & 0.325 \\ 
		& 300 & 0.059 & 0.725 & 0.215 \\ 
		& 900 & 0.104 & 0.714 & 0.182 \\ 
		\hline
	\end{tabular}
	\caption{Choice proportions for selection algorithm,  nested models. SB=Satorra-Bentler. EHALF=half eigenvalue approximation, $\hat p_{n,\text{half}}$. EFULL= Full eigenvalue approximation, $\hat p_n$.  }
	\label{tab::nestedcounts}
\end{table}

\subsection{Tests for AR and for SB consistency} \label{section::bootstrapSim}

To evaluate Type I error rates of the SB consistency and AR tests proposed in Algorithm \ref{alg::bootstrapSB}, 
we simulated multivariate normal data for the Bollen model. Under normal data both AR and SB consistency holds. 
We simulated 2000 samples for sample sizes $n=200,400, 800$ and $2000$. For each sample $1000$ bootstrap samples were drawn. 
The rejection rates are given in Table \ref{tab::arsb}, and clearly demonstrates that these procedures
need large sample sizes in order to reach acceptable Type I error rates. 

\begin{table}[ht]
	\centering
	\begin{tabular}{lrrrr}
		Test & $n=200$ & $n=400$ & $n=800$ & $n=2000$ \\ 
		\hline
		AR & 0.354 & 0.203 & 0.081 & 0.035 \\ 
		SB & 0.369 & 0.195 & 0.070 & 0.033 \\ 
		\hline
	\end{tabular}
	\caption{Type I error rates for tests of asymptotic robustness (AR) and Satorra-Bentler (SB) consistency. }
	\label{tab::arsb}
\end{table}

\section{Discussion} 

This paper deals with the fundamental problem of hypothesis testing in moment structure models. We present new insight and practically applicable statistical methodology for SEM and related models.

Some of our conclusions may seem surprising, as they go against what is often taught in standard courses on SEM. For example, the simulation summarized in Table \ref{tab::nonnestedNEW} shows that our selector can have better finite sample performance than the NTML test also when data are exactly normal. Since this paper have focused exclusively on Type I error, ``better'' here means having a rejection rate closer to the nominal one.

Since this conclusion may seem counterintuitive, it is worth pausing and considering what the NTML does. Firstly, we must keep in mind that the NTML is a test based on asymptotic theory, also when data are exactly normally distributed. That is, the Type I error rate of an NTML test at level $\alpha$ converges to $\alpha$ under normality, and for the model considered in Table \ref{tab::nonnestedNEW}, convergence is still not quite achieved for $n = 100$. Secondly, we note that under normality,  NTML calculates the oracle p-value exactly. That is, it is the ultimate approximation to the oracle test, which has a rejection rate of 7.7 \%. Hence, the NTML has only one source of approximation error: the validity of the fundamental convergence of the oracle.

All methods considered in this paper -- with the important exception of the Bollen-Stine bootstrap and the selector -- tries to approximate the oracle, and thereby introducing another source of approximation error. Let us say they are oracle-based. Except the NTML, which calculates the oracle perfectly -- but only under exact normality, the oracle-based methods have varying degrees of success in their approximation. Strictly speaking, oracle-based methods should be judged on whether they manage to achieve what they set out to do: approximate the oracle. But that is not the success criteria of interests to the user: When a level $\alpha$ test is employed, the Type I error rate should be very close to $\alpha$. As is clear from our simulation studies, this may not be the case even when using the actual oracle. When oracle-based tests have Type I error rate considerably closer to $\alpha$ than the oracle, it is tempting to say that they are performing well. This temptation should be avoided, as the deviation in Type I error compared to the oracle is then solely due to chance variations caused by the estimation of $\lambda$.

The selector overcomes this hurdle by being anchored not in the fundamental convergence of the oracle, but by transforming the data to a setting where the null hypothesis holds. It is then known that a correct p-value is uniformly distributed, i.e., the Type I error rate of a test with level $\alpha$ is to be exactly $\alpha$. It is this anchoring that allows us to search for the procedure which best achieves this goal, without having to compare our methods to the finite sample performance of the oracle. And so when the selector has a Type I error rate close to the nominal, it is by design, and not solely due to chance variations. This is a property shared with the Bollen-Stine bootstrap procedure, but the Bollen-Stine procedure rests on the quality of the approximation of the empirical distribution function compared to the data's actual distribution function. So do we, since we use the non-parametric bootstrap in our selector, but we are able to combine the fundamental convergence of the oracle with the non-parametric bootstrap. We have seen that this allows us to combine the strengths of both methods.

Let us return to the NTML, and look at the proposed methods from a slightly different perspective that elaborates on the above. It is well-known that the NTML usually has a much too high Type I error rate under non-normality. The major source of the mismatch between nominal and actual Type I error rate is that the NTML need not be a consistent approximation to the oracle. The NTML can be seen as estimating $\lambda$ always by the constant $(1,\ldots, 1)'$. When $\lambda$ is far away from $(1,\ldots, 1)'$, NTML performs poorly. And for a user, it typically performs poorly in a particularly bad way: even when a hypothesized theory holds, the NTML will most likely reject it.

The Satorra-Bentler test has previously been reported to have inflated Type I error rates under non-normality, 
and this behaviour is also observed in simulations in the present paper. 
This is mainly due to two reasons: firstly, it may be that the Satorra-Bentler procedure is inconsistent, i.e. $\lambda$ has variation among its elements. While inconsistency is an asymptotic property, which may seem irrelevant in small samples, it does mean that the procedure does not aim to estimate what the user wants, and may therefore be reflected also in small-sample situations when the procedure is used uncritically. Secondly, the Satorra-Bentler procedure estimates $\lambda$, and the variability of the resulting p-value approximation may give inflated Type I errors even when the procedure is consistent.

These two problems, consistency and finite sample approximation error, are shared also by our suggested p-value approximations. However, the contextual framework presented in the present paper allows us to argue about balancing these issues, and selecting among competing approximations. This perspective may lead to further insight in future research, and has already led to our proposed selector.

We note that while $\hat p_n$ and $\hat p_{n, \text{half}}$ can be computed just as fast as the Satorra-Bentler test statistics, both the selector and the Bollen-Stine bootstrap procedure takes considerable more computation time.
Our simulation experiments indicate that the selector and the Bollen-Stine bootstrap are comparable in performance, but that the selector works slightly better, especially in small sample situations. 
Our recommendations to practitioners are therefore clear: use the selector in small sample situations, and use the selector or the Bollen-Stine bootstrap in medium sample situations. In large sample situations, consistent p-value approximation gives similar answers. Since the assumptions underlying asymptotic robustness and Satorra-Bentler consistency rests on delicate properties of high order moments that can only be properly tested in large sample situations, we do not recommend using the NTML nor the Satorra-Bentler statistic without assessing its performance with the selector. In many cases, the Satorra-Bentler statistic will be the superior test, but it is difficult to know this without using techniques such as the re-sampling methods underlying the selector.

With current and future computers containing multiple units that can perform computƒation simultaneously (multi-core central processing units and multi-core graphical processing units supporting general purpose calculations), using the selector does not take much time to run. In our prototype implementation in the scripting language R (which means our code is not compiled, and therefore slow), it takes a few additional minutes compared to standard p-value approximations that we have seen often performs considerably worse. Considering the enormous amount of time and effort many researchers use in gathering and analyzing data, the extra time spent on using the selector is vanishing in comparison. 

Applied researchers are often personally interested in controlling Type I error as well as possible, as their research hypothesis is often the null hypothesis. If they use testing procedures, such as the NTML with non-normal data, where the Type I error is seriously inflated, this is to their disadvantage.
This point is also connected to the  use of pragmatic fit indices available in the literature. The p-value approximations discussed in this paper are all based on solid statistical theory. The ad-hoc nature of some of these fit indices, with somewhat arbitrary cut-off points being interpreted in various ways, are not based on statistical theory. 

Finally, we mention that the ideas contained in this paper can be generalized in several directions, including SEM with ordinal variables and in multi-group settings.  Also, additional simulation experiments should be performed on the proposed methods, such as power studies, allowing the selector more options, and experimenting with different selection criterias.

 \newcommand{\noop}[1]{}

\begin{appendix}

	\section{Proof of Theorem \ref{theorem::consistency}} \label{section::proof}
	
	\begin{proof}[Proof of Theorem \ref{theorem::consistency}]
		By the mean value theorem, there is a sequence of random variables $0 \leq r_n \leq 1$ so that
$
			\hat p_{n} = 1 - H(T_n; \hat \lambda) = 1 - H(T_n; \lambda) + R_n = p_n + R_n
$
		where
		$
		R_n = \ \sum_{j=1}^d (\hat \lambda_j - \lambda_j) H_j(T_n; \lambda + r_n (\hat \lambda - \lambda))$ and $H_j(q, (l_1, \ldots, l_d)') = \frac{\partial H(q, (l_1, \ldots, l_d)'}{\partial l_j}.
		$
		The statement of the theorem therefore holds if we show that $H_j(T_n; \lambda + r_n (\hat \lambda - \lambda)) = O_P(1)$. 
		To show this, we calculate $H_j$. The cumulative distribution function of $S = \ \sum_{j=1}^{d} \lambda_j Z_j^2$ is $H_S(q) = \int_0^q h_S(s) \, \d s$ where $h_S$ is the density of $S$. 
		Denote the density of $\lambda_j Z_j^2$ by $h_{j}(z)$.
		Since $(Z_j)_{j=1}^d$ contains independent variables, so does  $(\lambda_j Z_j^2)_{j=1}^d$. Hence $h_S$ is given by $d$-times convolution, i.e.~apply the well-known convolution formula iteratively, and see that 
		\begin{equation} \label{equ::fSconvolution}
		g_S(s) = \int_{\mathbb{R}} \cdots \int_{\mathbb{R}} \left[ \prod_{j=2}^{d} h_j(x_{j-1}) \right] h_1 \left( s - \sum_{j=1}^{d-1} x_j \right) \, \d x_1 \cdots \, \d x_{r-1},
		\end{equation}
		see also \citet{laury1976n} for some basic properties of $d$-times convolution.
		We wish to calculate
		\begin{equation} \label{equ::FderivlambdaDef}
		\frac{\partial}{\partial \lambda_j} H_S(q) = \int_0^q \frac{\partial}{\partial \lambda_j} h_S(s) \, \d s.
		\end{equation}
		It turns out that $\frac{\partial}{\partial \lambda_j} h_S $ is a weighted sum of densities, which implies that $\frac{\partial}{\partial \lambda_j} H_S$ is a weighted sum of cumulative distribution functions that is easy to bound uniformly. We now show this by calculating $\frac{\partial}{\partial \lambda_j} h_S$.
		Since summation is commutative, the distribution of $\sum_{j=1}^r \lambda_{\pi(j)} Z_{\pi(j)}^2$ is the same for any permutation $\pi(1), \ldots, \pi(r)$ of $\{1, \ldots, r\}$. 
		We may therefore, without loss of generality, assume that $j = d$. Using eq.~\eqref{equ::fSconvolution}, we have 
		\begin{multline} \label{equ::convolutionDeriv}
		\frac{\partial}{\partial \lambda_d} h_S(s) \\ 
		= \int_{\mathbb{R}} \cdots \int_{\mathbb{R}} \left\{ \frac{\partial}{\partial \lambda_d} h_d(x_{d-1}) \right\} [\prod_{j=2}^{d-1} h_j(x_{j-1})] h_1(s - \sum_{j=1}^{d-1} x_j) \, \d x_1 \cdots \, \d x_{d-1}
		\end{multline}
		We hence need to calculate $\frac{\partial}{\partial \lambda_d} h_d(x_{d-1})$.
		Since $\lambda_j Z_j^2$ is a linear transformation of $Z_j^2 \sim \chi_1^2$, we have $h_{j}(z) = h_{\chi^2}(z/\lambda_j)/\lambda_j$ where
		$
		h_{\chi^2}(z) = \frac{z^{1/2}}{\sqrt{2 \pi}} e^{-z/2} I \{ z \geq 0 \}
		$
		is the density of $Z_j^2 \sim \chi^2$.
		
		We have
$
			\frac{\partial}{\partial \lambda_d} h_d(z)  = \frac{\partial}{\partial \lambda_d} \lambda_d^\inv h_{\chi^2}(z/\lambda_d) = - \lambda_d^{-2} h_{\chi^2}(z/\lambda_d) + \lambda_d^\inv h_{\chi^2}'(z/\lambda_d) [ \frac{\partial}{\partial \lambda_d} z \lambda_d^\inv ] 
			 = - \lambda_d^{-2} h_{\chi^2}(z/\lambda_d) - \lambda_d^{-3} z h_{\chi^2}'(z/\lambda_d).
$
		For $z < 0$ then $h_d(z) = 0$ and so $\frac{\partial}{\partial \lambda_d} h_d(z) = 0$. The event $z = 0$ has probability zero and can be ignored. For $z > 0$ we have
$
			\sqrt{2 \pi} h_{\chi^2}'(z)  =\sqrt{2 \pi} \frac{\d}{\d z} \frac{z^{1/2}}{\sqrt{2 \pi}} e^{-z/2} = \frac{\d}{\d z} z^{1/2} e^{-z/2}
			 = \half z^{-1/2} e^{-z/2} + (-\half) z^{1/2} e^{-z/2} 
$
		so that
		$
		h_{\chi^2}'(z/\lambda_d) = \half \lambda_d^{1/2} z^{-1/2} e^{-z/(2 \lambda_d)} -\half \lambda_d^{-1/2} z^{1/2} e^{-z/(2 \lambda_d)} 
		$
		Inserting this into the expression obtained for $\frac{\partial}{\partial \lambda_d} h_r(z)$ gives
$
			\frac{\partial}{\partial \lambda_d} h_d(z) = - \lambda_d^{-2} f_{\chi^2}(z/\lambda_d) - \lambda_d^{-3} z [ \half \lambda_d^{1/2} z^{-1/2} e^{-z/(2 \lambda_d)} -\half \lambda_d^{-1/2} z^{1/2} e^{-z/(2 \lambda_d)}  ] 
			 = - \lambda_d^{-1} \lambda_d^{-1} h_{\chi^2}(z/\lambda_d) - \half \lambda_d^{-5/2} z^{1/2} e^{-z/(2 \lambda_d)} + \half \lambda_d^{-7/2} z^{3/2} e^{-z/(2 \lambda_d)}.
$
		
		We now note that $z \mapsto \lambda_d^{-1}h_{\chi^2}(z/\lambda_d)$ is a density, since it is the density of $\lambda_j Z_j^2$. Also, $z^{1/2} e^{-z/(2 \lambda_d)}$ and $z^{3/2} e^{-z/(2 \lambda_d)}$ are proportional to Gamma-distributions. Recall that the Gamma$(\alpha, \beta)$ density for $\alpha > 0, \beta > 0$ is $h_{G(\alpha, \beta)}(z) = \beta^\alpha  z^{\alpha-1} e^{-\beta x}/\Gamma(\alpha) I \{ z \geq 0 \}$ in which $\Gamma(z) = \int_0^\infty u^{z-1} e^{-u} \, \d u$. In conclusion, we see that
$
			\frac{\partial}{\partial \lambda_d} h_d(z)  = - \lambda_d^{-2} h_{\chi^2}(z/\lambda_d) - \half \lambda_d^{-5/2} \frac{\Gamma(3/2)}{(2 \lambda_d)^{3/2}} h_{G(3/2, 1/(2\lambda_d))}(z) 
			+ \half \lambda_d^{-7/2}  \frac{\Gamma(5/2)}{(2 \lambda_d)^{5/2}} h_{G(5/2, 1/(2\lambda_d))}(z) 
			 = - \lambda_d^{-2} h_{\chi^2}(z/\lambda_d) - 2^{-5/2} \lambda_d^{-4} \Gamma(3/2) h_{G(3/2, 1/(2\lambda_d))}(z) 
			+ 2^{-7/2} \lambda_d^{-6}  \Gamma(5/2) h_{G(5/2, 1/(2\lambda_d))}(z)
$
		
By the linearity of integration and $x_{d-1} \mapsto h_{\chi^2}(x_{d-1}/\lambda_d)/\lambda_d$, and $x_{d-1} \mapsto h_{G(5/2, 1/(2\lambda_d))}(x_{d-1})$, and $x_{d-1} \mapsto h_{G(5/2, 1/(2\lambda_d))}(x_{d-1})$ are densities, eq.~\eqref{equ::convolutionDeriv} is a weighted sum of convolutions of densities that result in new densities $h_A, h_B$ and $h_C$. That is, 
$\frac{\partial}{\partial \lambda_d} h_S(s) = - \lambda_d^{-1} h_A(z) - 2^{-5/2} \lambda_d^{-4} \Gamma(3/2) h_B(z) + 2^{-7/2} \lambda_d^{-6}  \Gamma(5/2) f_C(z)$.		
		Returning to eq.~\eqref{equ::FderivlambdaDef} we therefore see that
$
			\frac{\partial}{\partial \lambda_d} H_S(q)  =  \int_0^q - \lambda_d^{-1} h_A(s) - 2^{-5/2} \lambda_d^{-4} \Gamma(3/2) h_B(s) + 2^{-7/2} \lambda_d^{-6}  \Gamma(5/2) h_C(s) \, \d s 
			 = - \lambda_d^{-1} H_A(q) - \half \lambda_d^{-4} \Gamma(3/2) H_B(q) + 2^{-7/2} \lambda_d^{-6}  \Gamma(5/2) H_C(q)
$
		where $H_A, H_B, H_C$ are the cumulative distribution functions of $h_A, h_B, h_C$. 
		
		Recalling that cumulative distribution functions are probabilities, and hence has absolute values bounded by $1$, we see that
$
			|H_j(T_n; x + r_n h_n)| \leq |\lambda_j + r_n (\hat \lambda_j - \lambda_j)|^{-1} + 2^{-5/2} |\lambda_j + r_n (\hat \lambda_j - \lambda_j)|^{-4} \Gamma(3/2) 
			+ 2^{-7/2} |\lambda_j + r_n (\hat \lambda_j - \lambda_j)|^{-6} \Gamma(5/2).
$
		Since $0 \leq r_n \leq 1$ and $\hat \lambda_j \npconv \lambda_j > 0$, we see that $|H_j(T_n; x + r_n h_n)| = O_P(1)$. 
	\end{proof}

\end{appendix}

\end{document}